\newtheorem{theorem}{Theorem}
\newtheorem{definition}[theorem]{Definition}
\newtheorem{assumption}[theorem]{Assumption}
\DeclareMathOperator*{\argmin}{arg\,min}
\renewcommand{\Re}{\mathbb{R}}
\renewcommand{\citet}[1]{\cite{#1}}
\begin{document}

\title{ADMM and Accelerated ADMM as \\ Continuous Dynamical Systems}

\author{Guilherme Fran\c ca}
\email{guifranca@gmail.com}

\author{Daniel Robinson}
\email{daniel.p.robinson@jhu.edu}

\author{Ren\' e Vidal}
\email{rvidal@jhu.edu}

\affiliation{Mathematical Institute for Data Science, 
Johns Hopkins University, Baltimore MD 21218, USA}

\begin{abstract}
Recently, there has been an increasing interest in using tools from dynamical
systems to analyze the behavior of simple optimization algorithms such as
gradient descent and accelerated variants. This paper strengthens such
connections by deriving the differential equations that model the continuous
limit of the sequence of iterates generated by the alternating direction method
of multipliers, as well as an accelerated variant. We employ the direct method
of Lyapunov to analyze the stability of critical points of the dynamical
systems and to obtain associated convergence rates.
\end{abstract}

\maketitle

\section{Introduction}

The theory of dynamical systems has been extensively developed since its
origins by Poincar\' e in the late 19th century. For example, the work of
Lyapunov on stability is commonly used in physics,
control systems, and other branches of applied mathematics. However, 
the connection between dynamical systems and optimization algorithms has only
recently been studied. The basic idea is that tools from dynamical
systems can be used to analyze the stability and convergence rates of the
continuous 
limit of the sequence of iterates generated by optimization algorithms.
Prior work has established  
these connections for simple optimization algorithms
such as gradient descent (GD) and accelerated gradient descent (A-GD). This
paper 
improves upon prior work
by deriving the dynamical systems associated 
with the continuous limit of two 
commonly used optimization methods: the alternating
direction method of multipliers (ADMM) and an accelerated version of ADMM
(A-ADMM). Moreover, this paper analyzes the stability properties of the
resulting dynamical systems and derives their convergence rates, 
which for ADMM matches the known rate of its discrete 
counterpart, and for A-ADMM provides a new result.

\subsection{Related work}
Perhaps the simplest connection between an optimization algorithm
and a continuous dynamical system is exhibited by the GD method. 
The GD algorithm aims to minimize a function $f:\mathbb{R}^n \to \mathbb{R}$ 
via the update
\begin{equation}
\label{eq:gd}
x_{k+1} = x_k - \eta \nabla f(x_k) ,
\end{equation}
where $x_k$ denotes the $k$th solution 
estimate and $\eta > 0$ is the step size. 
It is immediate that a continuous limit of the iterate update~\eqref{eq:gd} 
leads to the gradient flow
\begin{equation}
\label{eq:gd_ode}
\dot{X} = -\nabla f(X) ,
\end{equation}
where $X = X(t)$ is the continuous limit of  $x_k$ and
$\dot{X} \equiv \tfrac{dX}{dt}$ denotes its time
derivative. It is known that GD has a convergence rate of $O(1/k)$ 
for convex functions~\cite{Nesterov:2004}.
Interestingly, the differential equation \eqref{eq:gd_ode} 
also has a convergence rate of $O(1/t)$, which is consistent with GD.

Nesterov \citet{Nesterov:1983} proposed an accelerated GD algorithm, 
henceforth referred to as  A-GD, by adding momentum to the $x$ variables.  
The update for A-GD may be written as
\begin{subequations}
\label{eq:agd}
\begin{align}
x_{k+1} &= \hat{x}_{k} - \eta \nabla f(\hat{x}_k),  \\
\hat{x}_{k+1} &= x_k + \dfrac{k}{k+r}(x_{k+1}-x_k),
\end{align}
\end{subequations}
where $r \ge 3$ ($r=3$ is the standard choice) 
and $\hat{x}_k$ denotes the $k$th accelerated vector. 
It is known that A-GD has a convergence rate of $O(1/k^2)$ for 
convex functions, which is known to be optimal in 
the sense of worst-case complexity~\cite{Nesterov:2004}.
Recently, the continuous limit
of A-GD was computed by \citet{Candes:2016} to be
\begin{equation}
\label{eq:nesterov_ode}
\ddot{X} + \dfrac{r}{t} \dot{X} + \nabla f(X) = 0,
\end{equation}
where $\ddot{X} \equiv \tfrac{d^2 X}{dt^2}$ is the acceleration.
By using a Lyapunov function, it was shown
that the convergence rate associated with 
\eqref{eq:nesterov_ode} is $O(1/t^2)$ when $f$ 
is convex \cite{Candes:2016}, 
which matches the known rate of $O(1/k^2)$ for A-GD.

To better
understand the acceleration
mechanism,
a variational approach 
was proposed~\cite{Wibisono:2016} for which 
the continuous limit of a class of accelerated 
methods was obtained using the Bregman Lagrangian; 
the class of methods includes A-GD,  
its non-Euclidean extension, and accelerated
higher-order gradient methods. Also, 
a differential equation modeling the continuous limit
of accelerated mirror descent was 
obtained \cite{Krichene:2015}.

While \citet{Candes:2016} focused on the discrete to continuous limit,
\citet{Wibisono:2016,Krichene:2015} stress the converse, by
which one starts with a second-order differential equation and then
constructs a discretization 
with a matching convergence rate. This approach can lead to new
accelerated algorithms. Indeed,
\citet{Krichene:2015} introduced a family of 
first-order accelerated methods and established their 
convergence rates by using a 
discrete Lyapunov function, which is analogous to its
continuous counterpart. 
In related work, \citet{Wilson:2016} proposed 
continuous and discrete time Lyapunov frameworks
for \mbox{A-GD} based methods that built additional connections 
between rates of convergence and the choice of discretization. 
In particular, they showed that a
naive discretization can produce iterates that 
do not match the convergence rate of the differential
equation and proposed 
rate-matching algorithms \cite{Wibisono:2016,Krichene:2015}. However, 
such rate-matching  algorithms 
introduce extra conditions such as an
intermediate sequence or a 
new function that must 
obey certain constraints, which is in stark contrast to A-GD.

An important algorithm commonly used in machine learning and statistics 
is ADMM
\cite{Gabay:1976,Glowinsky:1975,Boyd:2011,Eckstein:2015},
which is 
often more easily distributed when compared to its 
competitors and hence appealing for 
large scale applications. There are some interesting 
relations between
ADMM and \emph{discrete} dynamical systems. For instance,
the formalism of integral
quadratic constraints \cite{Lessard:2016} was applied to ADMM
\cite{Nishihara:2015}
under the assumption of strong convexity.
Based on this, \citet{FrancaBento:2016} establish an explicit upper
bound on the convergence rate of ADMM in terms of the algorithm 
parameters and the condition number of the problem by analytically solving
the semi-definite program introduced by \citet{Nishihara:2015}.
Moreover, 
for a class of convex quadratic consensus problems defined over a graph,
ADMM can be viewed as a lifted Markov chain
\cite{FrancaBento:2017,FrancaBento:2017_2} that exhibits 
a significant speedup in the mixing time compared to GD, 
which corresponds to the base Markov chain.
For convex functions, 
ADMM has an $O(1/k)$ convergence rate \cite{Eckstein:2015}. Recently,  
using the ideas of \citet{Nesterov:1983}, \citet{Goldstein:2014} proposed
an accelerated version of ADMM (henceforth 
called \mbox{ A-ADMM}) and established
a convergence rate of $O(1/k^2)$ when both $f$ and $g$ are \emph{strongly} 
convex functions, and moreover $g$ is a  quadratic function.

Although
there is extensive literature on 
ADMM and its variants, connections between their 
continuous limit and differential equations is unknown.
This paper is a first step in establishing such connections in the context 
of the problem\footnote{
The standard problem $\min_{x,z} f(x) + g(z)$ 
subject to $A x + Bz = c$ can be 
recovered by redefining $A$ when $B$ is invertible.
}
\begin{equation}
\label{eq:minimize}
\min_{x, z} \ \{ V(x,z) = f(x) + g(z)\} \qquad 
\mbox{subject to} \ \  z = A x,
\end{equation}
where $f:\Re^n\to\Re$ and $g:\Re^m \to \Re$ are  
continuously differentiable convex 
functions, 
$x\in\Re^n$, $z\in\Re^m$, $A \in\mathbb{R}^{m\times n}$ and $m \geq n$.
The problem formulation~\eqref{eq:minimize} covers many 
interesting applications 
in machine learning and statistics.
With that said, we also recognize that many important problems 
do not fall within the framework~\eqref{eq:minimize} due
to the assumption of differentiability, especially of $g$ which is usually
a regularization term. Such an assumption is a theoretical necessity 
to allow connections to differential equations\footnote{The 
differentiability assumption can be relaxed by using subdifferential calculus
and differential inclusions, but this is beyond the scope of this work.
}.

\subsection{Paper contributions}
Our first contribution is to show 
in Theorem~\ref{th:admm_ode} that the dynamical system 
that is the continuous limit of ADMM when applied to~\eqref{eq:minimize}
is given by the ADMM flow
\begin{equation}
\label{eq:admm1diff}
\big(A^T A\big) \dot X +  \nabla V(X) = 0.
\end{equation}
Note that when $A = I$ we obtain the dynamical 
system \eqref{eq:gd} (i.e., the continuous limit of GD), which can be 
thought of as an unconstrained formulation of~\eqref{eq:minimize}.  
Our second contribution is to show in 
Theorem~\ref{th:fast_admm_ode} that the 
dynamical system 
that is the continuous 
limit of A-ADMM
is given by the A-ADMM flow
\begin{equation}
\label{eq:fadmm1diff}
\big( A^T A \big) \left( \ddot{X} + 
\dfrac{r}{t} \dot{X}\right) + \nabla V(X) = 0. 
\end{equation}
Here, the dynamical 
system~\eqref{eq:nesterov_ode} that is the continuous 
limit of A-GD is a particular case obtained when $A = I$.
We then employ the direct method of Lyapunov to study 
the stability properties of 
both dynamical systems \eqref{eq:admm1diff} and \eqref{eq:fadmm1diff}.
We show that under reasonable assumptions on $f$ and $g$,
these dynamical systems are 
asymptotically stable.  
Also, we prove that \eqref{eq:admm1diff}  
has a convergence rate of $O(1/t)$, whereas \eqref{eq:fadmm1diff} has
a convergence rate of $O(1/t^2)$.

\subsection{Notation}
We use $\|\cdot\|$ 
to denote the Euclidean two norm, and $\langle u, v \rangle = u^T v $
to denote the inner product of $u, v\in\Re^n$. 
For our analysis, it is convenient to define the function
\begin{equation} \label{def.Vx}
V(x) = f(x) + g(Ax),
\end{equation}
which is closely related to the function $V(x,z)$ that 
defines the objective function in~\eqref{eq:minimize}. 
In particular, for all $(x,z)$ satisfying $z = Ax$, 
the relationship $V(x,z) = V(x)$ holds.
Throughout the paper, we make the following assumption.
\begin{assumption}\label{ass.basic}
The functions 
$f$ and $g$ in \eqref{eq:minimize} 
are continuously differentiable and convex, 
and $A$ has full column~rank.
\end{assumption}

\section{Continuous Dynamical Systems}
\label{sec:admm_ode}

In this section, we show that the continuous 
limits of the ADMM and A-ADMM algorithms are first- and second-order 
differential equations, respectively.

\subsection{ADMM}
\label{sec:admm}

The scaled form of ADMM
is given by
\cite{Boyd:2011} 
\begin{subequations}
\label{eq:admm}
\begin{align}
x_{k+1} &= 
\argmin_{x\in\Re^n} \ f(x) + \tfrac{\rho}{2} \| A x - z_k + u_k \|^2,
\label{eq:admm1} \\
z_{k+1} &= \argmin_{z\in\Re^m} \
g(z) + \tfrac{\rho}{2} \| A x_{k+1} - z + u_k \|^2 , 
\label{eq:admm2} \\
u_{k+1} &= u_k + Ax_{k+1} - z_{k+1},
\label{eq:admm3}
\end{align}
\end{subequations}
where $\rho > 0$ is a penalty parameter and $u_k\in\Re^m$ is the $k$th 
Lagrange multiplier estimate for the constraint $z = Ax$.  
Our next result shows how a continuous limit of the ADMM updates 
leads to a particular first-order differential equation.

\begin{theorem}
\label{th:admm_ode}
Consider the optimization problem \eqref{eq:minimize} and the 
associated function $V(\cdot)$ in~\eqref{def.Vx}.
The continuous limit associated with the ADMM updates in \eqref{eq:admm},
with time scale $t = k/\rho$,
corresponds to the initial value problem
\begin{equation}
\label{eq:admm_ode}
\dot X + \big(A^T A\big)^{-1} \nabla V(X) = 0 
%\quad \mbox{with $X(0) = x_0$.}
\end{equation}
with $X(0) = x_0$.
\end{theorem}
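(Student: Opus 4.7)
The plan is to combine the first-order optimality conditions of the $x$- and $z$-subproblems with the multiplier update in order to eliminate $u_k$ and produce a single recurrence in $x$ and $z$ that admits a clean continuous limit. First I would record the optimality condition for~\eqref{eq:admm1},
\[
\nabla f(x_{k+1}) + \rho A^T(Ax_{k+1} - z_k + u_k) = 0,
\]
and for~\eqref{eq:admm2},
\[
\nabla g(z_{k+1}) = \rho(Ax_{k+1} - z_{k+1} + u_k) = \rho\, u_{k+1},
\]
where the last equality uses~\eqref{eq:admm3}. Using the identity $Ax_{k+1} - z_k + u_k = u_{k+1} + (z_{k+1} - z_k)$ and substituting $\rho A^T u_{k+1} = A^T \nabla g(z_{k+1})$ into the $x$-optimality yields the key recurrence
\begin{equation}\label{eq:key_recurrence}
\nabla f(x_{k+1}) + A^T \nabla g(z_{k+1}) + \rho A^T (z_{k+1} - z_k) = 0.
\end{equation}

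Next I would show that the primal feasibility gap vanishes faster than the discretization step $\delta := 1/\rho$, so that $z_k$ may be replaced by $Ax_k$ in the limit. From $u_{k+1} = \delta\,\nabla g(z_{k+1})$ and~\eqref{eq:admm3},
\[
Ax_{k+1} - z_{k+1} = u_{k+1} - u_k = \delta\bigl(\nabla g(z_{k+1}) - \nabla g(z_k)\bigr) = O(\delta^2),
\]
so $z_k = Ax_k + O(\delta^2)$ and therefore $\rho(z_{k+1}-z_k) = A\,(x_{k+1}-x_k)/\delta + O(\delta)$. With the identification $x_k = X(t_k)$, $t_k = k\delta = k/\rho$, this finite difference converges to $A\dot X(t)$ as $\rho\to\infty$, while $\nabla f(x_{k+1})\to\nabla f(X)$ and $\nabla g(z_{k+1})\to\nabla g(AX)$ by continuity.

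Passing to the limit in~\eqref{eq:key_recurrence} gives
\[
\nabla f(X) + A^T \nabla g(AX) + A^T A\, \dot X = 0,
\]
and since $\nabla V(X) = \nabla f(X) + A^T \nabla g(AX)$ by~\eqref{def.Vx}, while $A^T A$ is invertible because $A$ has full column rank (Assumption~\ref{ass.basic}), rearranging yields~\eqref{eq:admm_ode}. The initial condition $X(0) = x_0$ follows from $t_0 = 0$.

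The main obstacle is the limiting argument itself: the bound $\nabla g(z_{k+1}) - \nabla g(z_k) = O(\delta)$ needs some Lipschitz control on $\nabla g$ along the iterates, and interpreting $(x_{k+1}-x_k)/\delta$ as $\dot X$ requires enough regularity of the limiting curve. I would handle this in the same formal spirit as~\cite{Candes:2016}, assuming the iterates remain in a compact set where $\nabla f$ and $\nabla g$ are locally Lipschitz, so that the Taylor expansions above are valid uniformly and yield a well-defined limiting ODE.
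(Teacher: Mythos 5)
Your proposal is correct and follows essentially the same route as the paper: both derive the identical key recurrence $\nabla f(x_{k+1}) + A^T\nabla g(z_{k+1}) + \rho A^T(z_{k+1}-z_k)=0$, interpret $\rho\,(z_{k+1}-z_k)$ as a time derivative under the identification $\delta = 1/\rho$, and pass to the limit to obtain $A^TA\,\dot X + \nabla V(X) = 0$. The only substantive variation is in how primal feasibility is recovered: the paper applies the Mean Value Theorem to the multiplier update to conclude $Z(t)=AX(t)$ in the limit, whereas you use the explicit relation $u_{k+1}=\delta\,\nabla g(z_{k+1})$ to get the quantitative bound $Ax_{k+1}-z_{k+1}=O(\delta^2)$ --- both rest on the same formal smooth-trajectory ansatz and reach the same conclusion.
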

\begin{proof}
Since $f$ and $g$ are convex and $A$ has full column rank 
(see Assumption~\ref{ass.basic}), the optimization 
problems in \eqref{eq:admm1} 
and \eqref{eq:admm2} are strongly convex so that $(x_{k+1},z_{k+1},u_{k+1})$ 
is unique.  It follows from the optimality conditions 
for problems~\eqref{eq:admm1} and~\eqref{eq:admm2} 
that $(x_{k+1},z_{k+1},u_{k+1})$ satisfies
\begin{subequations}
\begin{align}
\nabla f\left(x_{k+1}\right) + \rho A^T\left(Ax_{k+1} - z_k + u_k \right) 
&= 0, \\
\nabla g\left(z_{k+1}\right) - \rho \left(Ax_{k+1} - z_{k+1} 
+ u_k \right) &= 0, \\
u_{k+1} -  u_k - Ax_{k+1} + z_{k+1} &= 0,
\label{eq:admm3_2}
\end{align}
\end{subequations}
which can be combined to obtain
\begin{equation}
\label{eq:admm_combined}
\nabla f\left(x_{k+1}\right) + A^T \nabla g\left(z_{k+1}\right) + 
\rho A^T\left( z_{k+1} - z_k \right) = 0 .
\end{equation}

Let  $t = \delta k$ and
$x_k = X(t)$, 
with a similar notation 
for $z_k$ and $u_k$. Using the Mean Value Theorem on the $i$th 
component of $z_{k+1}$
we have that $z_{k+1,i} = Z_i(t+\delta) = Z_i(t) + \delta
\dot{Z}_i(t+\lambda_i\delta)$ for some $\lambda_i\in[0,1]$. Therefore,
\begin{equation}
\label{eq:first_proof_der}
\lim_{\delta\to 0}
\dfrac{z_{k+1, i} - z_{k,i}}{\delta} = \lim_{\delta\to 0}
\dot{Z}_i(t+\lambda_i\delta) 
= \dot{Z}_i(t).
\end{equation}
Since this holds for every component $i=1,\dotsc,m$ we see that, in the
limit $\delta \to 0$, the third 
term in \eqref{eq:admm_combined} is exactly equal to the vector $\dot{Z}(t)$,
provided we choose $\rho = 1/\delta$. For the first two terms of
\eqref{eq:admm_combined},
note that
\begin{equation}
\label{eq:first_proof_grad}
\begin{split}
\nabla f(x_{k+1})  +  A^T\nabla g(z_{k+1})  
&=  \nabla f(X(t+\delta)) + A^T\nabla g(Z(t+\delta))  \\
& \to \nabla f(X(t)) + A^T\nabla g(Z(t))
\end{split}
\end{equation}
as $\delta \to 0$.
Thus, taking the limit $\delta \to 0$ in \eqref{eq:admm_combined} 
and substituting
\eqref{eq:first_proof_der} and \eqref{eq:first_proof_grad} yields
\begin{equation}
\label{eq:admm_combined_limit}
\nabla f(X(t)) + A^T \nabla g(Z(t)) + A^T \dot{Z}(t) = 0.
\end{equation}
Let us now consider the $i$th component of \eqref{eq:admm3_2}.
By the Mean Value Theorem there exists $\lambda_i \in [0,1]$
such that
\begin{equation}
\begin{split}
0 &= 
U_i(t+\delta) - U_i(t) - (A X)_i(t+ \delta) + Z_i(t+ \delta)  \\
& = \delta \dot{U}_i(t+\lambda_i \delta) - 
(A X)_i(t+ \delta) + Z_i(t+\delta)  \\
&\to Z_i(t) -(AX)_i(t) 
\end{split}
\end{equation}
as $\delta \to 0$.
Since this holds for every $i = 1,\dotsc,m$ we conclude that
$Z(t) = AX(t)$
and $\dot{Z}(t) = A \dot{X}(t)$. Moreover, 
recalling the definition \eqref{def.Vx}, note that
\begin{equation}
\label{eq:admm_grad_V}
\begin{split}
\nabla f(X) + A^T \nabla g(Z) 
&= \nabla f(X) + A^T \nabla g(AX) \\
&= \nabla V(X) .
\end{split}
\end{equation}
Therefore, \eqref{eq:admm_combined_limit} becomes
\begin{equation}
\nabla V(X(t)) + A^T A \dot{X}(t) = 0,
\end{equation}
which is equivalent to~\eqref{eq:admm_ode} 
since $A$ has full column rank. 

Finally, since 
\eqref{eq:admm_ode} is a first-order differential equation,
the dynamics is specified 
by the initial condition $X(0) = x_0$, 
where $x_0$ is an initial solution estimate to~\eqref{eq:minimize}.
\end{proof}

We remark that the continuous limit of 
ADMM given by  \eqref{eq:admm_ode} and 
the continuous limit of GD given by \eqref{eq:gd_ode} 
are similar---first-order gradient systems---with
the only difference being the additional $(A^T A)^{-1}$ term. 
Thus,  
in the special case $A = I$, i.e., the unconstrained 
case,  the differential 
equation \eqref{eq:admm_ode} reduces to \eqref{eq:gd_ode}.

\subsection{A-ADMM}
\label{sec:a-admm}

We now consider an accelerated version of
ADMM that was originally proposed by \citet{Goldstein:2014}, 
which follows the same idea introduced by
\citet{Nesterov:1983} to accelerate GD.
The scaled A-ADMM method for solving problem~\eqref{eq:minimize}
can be written as follows:
\begin{subequations}
\label{eq:fast_admm}
\begin{align}
x_{k+1} &= \argmin_{x\in\Re^n} \ f(x) + \tfrac{\rho}{2} \| Ax - \hat{z}_k + 
\hat{u}_k\|^2, \label{prov.x.aadmm} \\
z_{k+1} &= \argmin_{z\in\Re^{m}} \ g(z) + 
\tfrac{\rho}{2} \| Ax_{k+1} - z + \hat{u}_k\|^2, \label{prov.z.aadmm} \\
u_{k+1} &= \hat{u}_k + Ax_{k+1} - z_{k+1}, \\
\hat{u}_{k+1} &= u_{k+1} + \gamma_{k+1} \left( u_{k+1}-u_k \right), \\
\hat{z}_{k+1} &= z_{k+1} + \gamma_{k+1} \left( z_{k+1}-z_k \right),
\end{align}
\end{subequations}
where $\hat{u}$ and $\hat{z}$ are the ``accelerated variables'' and
\begin{equation}
\label{eq:beta}
\gamma_{k+1} = \dfrac{k}{k+r}
\end{equation}
with $r \geq 3$.  We remark that the particular choice $r = 3$ 
produces the same asymptotic behavior as the parameter choice
in \citet{Goldstein:2014,Nesterov:1983}.
Our next result shows how a continuous 
limit of the A-ADMM updates is a second-order differential equation.

\begin{theorem}
\label{th:fast_admm_ode}
Consider the optimization problem \eqref{eq:minimize} and the 
associated function $V(\cdot)$ in~\eqref{def.Vx}.
The continuous limit 
associated with 
the A-ADMM updates in~\eqref{eq:fast_admm},
with time scale \mbox{$t = k/\sqrt{\rho}$},
corresponds to the initial value problem
\begin{equation}
\label{eq:fast_admm_ode}
\ddot{X} + \frac{r}{t} \dot{X} + 
\big(A^T A\big)^{-1} \nabla V(X) = 0 
\end{equation}
with $X(0) = x_0$ and $\dot{X}(0) = 0$.
\end{theorem}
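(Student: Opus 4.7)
The plan is to follow the same script as in the proof of Theorem~\ref{th:admm_ode}, but carry the Taylor expansions one order further to pick up a second-order term, and rescale $\rho$ accordingly. First, I would write the optimality conditions for \eqref{prov.x.aadmm}--\eqref{prov.z.aadmm} and combine them with the multiplier update to eliminate $\hat u_k$, exactly as was done to obtain \eqref{eq:admm_combined}. After elimination, one is left with
\begin{equation*}
\nabla f(x_{k+1}) + A^T \nabla g(z_{k+1}) + \rho A^T(z_{k+1} - \hat z_k) = 0,
\end{equation*}
which is structurally identical to \eqref{eq:admm_combined} except that $z_k$ is replaced by the accelerated iterate $\hat z_k = z_k + \gamma_k(z_k - z_{k-1})$.

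Next, I would expand $z_{k+1}-\hat z_k = (z_{k+1}-z_k) - \gamma_k(z_k - z_{k-1})$ via the Mean Value Theorem / Taylor's theorem to second order. With the new time scale $\delta = 1/\sqrt{\rho}$ and $t = \delta k$, one has $z_{k+1}-z_k = \delta \dot Z(t) + \tfrac12 \delta^2 \ddot Z(t) + o(\delta^2)$ and $z_k-z_{k-1} = \delta \dot Z(t) - \tfrac12 \delta^2 \ddot Z(t) + o(\delta^2)$. Combined with $1 - \gamma_k = r/(k+r-1) = r\delta/t + o(\delta)$ and $1+\gamma_k \to 2$, this yields
\begin{equation*}
\rho\,(z_{k+1} - \hat z_k) = \frac{1}{\delta^2}\left[(1-\gamma_k)\delta\,\dot Z + (1+\gamma_k)\tfrac{\delta^2}{2}\ddot Z + o(\delta^2)\right] \longrightarrow \ddot Z(t) + \frac{r}{t}\,\dot Z(t).
\end{equation*}
This is the critical calculation: the choice $\rho = 1/\delta^2$ is precisely what makes the quadratic term persist in the limit, while the factor $r/t$ appears automatically from the asymptotics of $\gamma_k$. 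I expect this to be the main obstacle, since one must be careful that the error terms from both Taylor expansions and from the $1/(k+r-1) - \delta/t$ difference vanish after multiplication by $\rho = 1/\delta^2$.

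With this in hand, the remaining steps mirror the end of the proof of Theorem~\ref{th:admm_ode}. The gradient terms converge pointwise as in \eqref{eq:first_proof_grad}. To recover the constraint $Z = AX$, I would apply the Mean Value Theorem to the multiplier update $u_{k+1} - \hat u_k = A x_{k+1} - z_{k+1}$ and observe that the left-hand side vanishes in the limit (even after accounting for $\hat u_k - u_k = \gamma_k(u_k - u_{k-1}) = O(\delta)$), giving $Z(t) = A X(t)$ and hence $\ddot Z = A\ddot X$, $\dot Z = A\dot X$. Together with \eqref{eq:admm_grad_V}, the limiting equation becomes $\nabla V(X) + A^T A\bigl(\ddot X + \tfrac{r}{t}\dot X\bigr) = 0$, which yields \eqref{eq:fast_admm_ode} after premultiplying by $(A^T A)^{-1}$, valid since $A$ has full column rank.

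Finally, for the initial conditions, $X(0) = x_0$ is direct. The condition $\dot X(0)=0$ reflects that A-ADMM is initialized with $z_0 = z_{-1}$ (equivalently, no momentum at the start), so $\hat z_0 = z_0$ and $z_1 - z_0 = O(\delta^2)$, forcing $\dot Z(0) = 0$ and hence $\dot X(0) = 0$ through $\dot Z = A\dot X$ and full column rank of $A$.
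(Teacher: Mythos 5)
Your proposal follows essentially the same route as the paper's proof: combine the optimality conditions to isolate $\rho A^T(z_{k+1}-\hat z_k)$, Taylor-expand $z_{k\pm1}$ to second order together with $\gamma_k = 1 - r\delta/t + O(\delta^2)$ so that the choice $\rho = 1/\delta^2$ leaves $\ddot Z + \tfrac{r}{t}\dot Z$ in the limit, recover $Z=AX$ from the multiplier update, and invert $A^TA$. The only (minor) divergence is the justification of $\dot X(0)=0$: you argue from the algorithm's momentum-free initialization that $z_1-z_0=O(\delta^2)$, whereas the paper deduces it directly from the limiting ODE via the Mean Value Theorem and the identity $\dot X_i(0)=(1-r)\dot X_i(0)$; both are acceptable at the paper's level of rigor.
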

\begin{proof}
According to Assumption~\ref{ass.basic}, the functions
$f$ and $g$ are convex and $A$ 
has full column rank, therefore the optimization 
problems \eqref{prov.x.aadmm} and \eqref{prov.z.aadmm} are strongly convex, 
making $(x_{k+1},z_{k+1},u_{k+1},\hat u_{k+1},\hat z_{k+1})$ unique.  
It thus follows from the optimality conditions  that 
\begin{subequations}
\label{eq:fast_admm_simplified}
\begin{align}
\nabla f(x_{k+1}) + A^T \nabla g(z_{k+1})  
+\rho A^T \!\left(z_{k+1}-\hat{z}_k \right) &= 0,
\label{eq:fast_admm_simplified1}
\\
u_{k+1} - \hat{u}_k - Ax_{k+1} +z_{k+1} &= 0 ,
\label{eq:fast_admm_simplified2}
\\
\hat{u}_{k+1} - u_{k+1} - \gamma_{k+1}\left( u_{k+1} - u_k\right) &= 0 ,
\label{eq:fast_admm_simplified3}
\\
\hat{z}_{k+1} - z_{k+1} - \gamma_{k+1} \left( z_{k+1} - z_k \right) &=0.
\label{eq:fast_admm_simplified4}
\end{align}
\end{subequations}
Let $t = \delta k$, $x_k = X(t)$ and similarly 
for $z_k$, $u_k$, $\hat{z}_k$ and $\hat{u}_k$.
Consider Taylor's Theorem for the $i$th component of $z_{k\pm 1}$:
\begin{equation}
\label{eq:taylor}
\begin{split}
z_{k\pm 1,i} &= Z_i(t \pm \delta) \\
& = Z_i(t) \pm \delta \dot{Z}_i(t) + 
\tfrac{1}{2}\delta^2 \ddot{Z}_i(t \pm \lambda_i^{\pm} \delta)
\end{split}
\end{equation}
for some $\lambda_i^{\pm} \in [0,1]$. Hence, 
from \eqref{eq:fast_admm_simplified4} we have
\begin{equation}
\label{eq:zzhat} 
\begin{split}
z_{k+1, i} - \hat{z}_{k,i} 
&= z_{k+1,i} - z_{k,i} - \gamma_{k} \left( z_{k,i} - z_{k-1,i}\right)  \\
&= (1-\gamma_k)\delta \dot{Z}_i(t) + \tfrac{1}{2} \delta^2
\ddot{Z}_i(t+\lambda_i^+ \delta) 
+\tfrac{1}{2}\gamma_k
\delta^2 \ddot{Z}_i(t-\lambda_i^-\delta) .
\end{split}
\end{equation}
From the definition \eqref{eq:beta}, and $t = \delta k$, we have
\begin{equation}
\label{eq:gammak}
\begin{split}
\gamma_{k} &= \dfrac{k-1}{k+r-1}  = 1 - \dfrac{r}{k+r-1} = 
1 - \dfrac{\delta r}{t + \delta(r-1)} \\
&= 1 - \dfrac{\delta r}{t} + O(\delta^2).
\end{split}
\end{equation}
Replacing this into \eqref{eq:zzhat} we obtain
\begin{equation}
\label{eq:zzhat2}
\begin{split}
\dfrac{z_{k+1,i}-\hat{z}_{k,i}}{\delta^2} &= \dfrac{r}{t} \dot{Z}_i(t)
+ \tfrac{1}{2}\ddot{Z}_i(t+\lambda_i^+ \delta)  
 + \tfrac{1}{2}\ddot{Z}_i(t-\lambda_i^- \delta) 
+ O(\delta) \\
& \to
\dfrac{r}{t}\dot{Z}_i(t)+\ddot{Z}_i(t)
\end{split}
\end{equation}
as $\delta \to 0$.
Hence, if we choose $\rho = 1/\delta^2$, then the limit of the third term in 
\eqref{eq:fast_admm_simplified1} is equal to $\tfrac{r}{t}A\dot{Z}(t) +
A\ddot{Z}(t)$.
Recalling \eqref{eq:first_proof_grad}, the limit 
of \eqref{eq:fast_admm_simplified1} as $\delta \to 0$ is thus given by
\begin{equation}
\nabla f(X) + A^T \nabla g(Z) + A^T\left( 
\dfrac{r}{t} \dot{Z}
+\ddot{Z}\right) = 0.
\end{equation}
Next, using \eqref{eq:gammak} into 
the $i$th component of 
\eqref{eq:fast_admm_simplified3} 
we obtain
\begin{equation}
\begin{split}
0 &= \hat{u}_{k,i}-u_{k,i} - \gamma_{k}(u_{k,i} - u_{k-1,i}) \\
&= \hat{U}_i(t) - U_i(t) - \left(1-O(\delta)\right)\delta \, 
\dot{U}_i(t-\lambda_i^-\delta) \\
& \to \hat{U}_i(t) - U_i(t)
\end{split}
\end{equation}
as $\delta \to 0$.
Since this holds for every component $i=1,\dotsc,n$ it follows that
$\hat{U}(t) = U(t)$.
Substituting this into
\eqref{eq:fast_admm_simplified2} implies that
$Z(t) = A X(t) $, which in turn implies $\dot{Z}(t) = A\dot{X}(t)$ and 
$\ddot{Z}(t) = A \ddot{X}(t)$.
Using these, and also \eqref{eq:admm_grad_V}, 
we obtain from \eqref{eq:fast_admm_simplified1} the differential
equation 
\begin{equation}
\label{eq:final-aadmm}
\begin{split}
\nabla V(X) + A^T A \left( \ddot{X} + 
\dfrac{r}{t} \dot{X}\right) = 0. 
\end{split}
\end{equation}
Since $A$ has full column rank, so that $A^T A$ is invertible, we 
see that  \eqref{eq:final-aadmm} is equivalent to~\eqref{eq:fast_admm_ode}.

It remains to consider the initial conditions. The first condition is 
$X(0) = x_0$, where $x_0$ is an initial estimate of a 
solution to~\eqref{eq:minimize}.  
Next, using the Mean Value Theorem, we have 
$\dot{X}_i(t) = \dot{X}_i (0) + t \ddot{X}_i(\xi_i)$ for 
some $\xi_i \in [0, t]$ and $i=1,\dotsc,n$.  Combining this 
with~\eqref{eq:fast_admm_ode} 
yields
\begin{equation}
\dot{X}_i(t) = \dot{X}_i(0) - r \dot{X}_i(\xi_i) - 
t \big[ \big(A^T A\big)^{-1} \nabla V(X(\xi_i)) \big]_i
\end{equation}
for all $i=1,\dots,n$. 
Letting $t \to 0^+$, which also 
forces $\xi_i \to 0^+$, 
we have that
$\dot{X}_i(0) = (1-r) \dot{X}_i(0)$ for each $i=1,\dotsc,n$.  
Since $r \ne 1$ by the choice of $\gamma_k$ in~\eqref{eq:beta}, it 
follows that $\dot X(0) = 0$, as claimed.
\end{proof}

We remark that the continuous limit of 
A-ADMM given by \eqref{eq:fast_admm_ode} and 
the continuous limit of A-GD given by \eqref{eq:nesterov_ode} 
are similar---second-order 
dynamical systems---with the only difference being 
the additional $(A^T A)^{-1}$ term. Therefore,  
in the special case $A = I$, i.e., the unconstrained case, 
\eqref{eq:fast_admm_ode} reduces to \eqref{eq:nesterov_ode}.

We close this section by noting one interesting difference between 
the derivations for the dynamical systems associated with 
ADMM and A-ADMM. Namely, the derivation for ADMM 
required choosing $\delta = 1/\rho$, whereas the derivation 
for A-ADMM made the choice $\delta = 1/\sqrt{\rho}$. 
Since the relationship $t = \delta k$ holds, we see that 
for fixed $k$ and $\rho > 1$, the time elapsed for A-ADMM is 
larger than that for ADMM, which highlights the 
acceleration achieved by A-ADMM.

\section{A Review of Lyapunov Stability}
\label{sec.L}

In the next section, we will use a Lyapunov stability approach to analyze the 
dynamical systems established in the previous section for 
ADMM and A-ADMM, namely~\eqref{eq:admm_ode} 
and~\eqref{eq:fast_admm_ode}, respectively.  
In this section, we give the required background material.

For generality, consider the first-order dynamical system
\begin{equation}
\label{eq:dynamical}
\dot{Y} = F(Y,t) \qquad \mbox{with $Y(t_0) = Y_0$,}
\end{equation}
where $F: \mathbb{R}^p \times \mathbb{R}\to \mathbb{R}^p$, 
$Y=Y(t) \in \mathbb{R}^p$, and $Y_0\in\Re^{p}$.
When $F$ is Lipschitz continuous 
this initial value problem 
is well-posed.
Indeed, let $\Omega \subseteq \mathbb{R}^p\times\mathbb{R}$ and
suppose that $F$ is continuously differentiable on $\Omega$.
Let $(Y_0, t_0) \in
\Omega$. The Cauchy-Lipschitz theorem assures that
\eqref{eq:dynamical} has a unique solution $Y(t)$ on an open interval around
$t_0$ such that $Y(t_0) = Y_0$. This solution may be extended 
throughout $\Omega$. Moreover, the solution
is a continuous function of the initial condition $(Y_0, t_0)$, and
if $F$ depends continuously on some set of parameters, then it is 
also a continuous function of those parameters \cite{Smale}.

For a dynamical system in the form \eqref{eq:dynamical} we have
the following three basic types of stability.

\begin{definition}[Stability \cite{Smale}]
\label{th:stability}
A point $Y^\star$ such that $F(Y^\star, t) = 0$ for 
all $t \ge t_0$ is called
a critical point of 
the dynamical system \eqref{eq:dynamical}. We say the following:
\begin{itemize}
\item[(i)] $Y^\star$ is \emph{stable} if for every 
neighborhood $\mathcal{O}\subseteq\mathbb{R}^p$
of $Y^\star$, 
there exists a neighborhood $\bar{\mathcal{O}} \subseteq \mathcal{O}$ 
of $Y^\star$ such that every solution $Y(t)$ 
with initial condition
$Y(t_0)=Y_0 \in \bar{\mathcal{O}}$ is defined and remains 
in $\mathcal{O}$ for all $t > t_0$;
\item[(ii)] $Y^\star$ is \emph{asymptotically stable} if it
is stable and, additionally, satisfies 
$\lim_{t\to\infty} Y(t) = Y^\star$ for all $Y_0\in\bar{\mathcal{O}}$;
\item[(iii)] $Y^\star$ is \emph{unstable} if it is not stable.
\end{itemize}
\end{definition}

Stability implies the existence of a 
region around $Y^\star$, i.e., the basin of attraction, in 
which solutions to the differential equation remain in such a region 
provided the initial condition $Y_0$ 
is sufficiently close to $Y^\star$.
Asymptotic stability is stronger, further requiring 
that trajectories converge
to $Y^\star$. We note that 
convergence of the trajectory alone does not imply
stability.

Lyapunov formulated a strategy that enables one 
to conclude stability without integrating the equations of motion.

%\begin{theorem}[Lyapunov \cite{Smale,LaSalle}]
%\label{th:Lyapunov}
%Let $Y^\star$ be a critical point of \eqref{eq:dynamical}.  Also, let 
%$\Omega := \mathcal{O} \times [t_0,\infty)$ with 
%$\mathcal{O}\subseteq\mathbb{R}^n$ an open set containing $Y^\star$, 
%%$t_0$,
%and 
%$\mathcal{E}: \Omega \to \mathbb{R}$  a continuously differentiable function. 
%We thus have the following:
%\begin{enumerate}
%\item[(i)] if there exists a function $W(\cdot)$ that is 
%positive definite at $Y^\star$ over $\mathcal{O}$ such that 
%\begin{align}
%&\mathcal{E}(Y^\star,t) = 0 \ \ \text{for all $t \ge t_0$}, \label{L.1}\\
%&\mathcal{E}(Y,t) \ge W(Y) \ \ 
%\text{for all $t \ge t_0$ and $Y \in \mathcal{O}$,} \label{L.2}\\
%&\dot{\mathcal{E}}(Y, t) \leq 0 \ \ 
%\text{for all $t \ge t_0$ and $Y \in \mathcal{O}$,}\label{L.3}
%\end{align}
%then $Y^\star$ is stable and $\mathcal{E}$ is called
%a Lyapunov function;
%\item[(ii)] if condition~\eqref{L.3} is replaced by
%\begin{equation} \label{L.4}
%\dot{\mathcal{E}}(Y,t)
%\leq -W(Y) \ \ \text{for all $t\ge t_0$ and $Y \in \mathcal{O}$,}
%\end{equation}
%where $W(\cdot)$ is another positive definite function at $Y^\star$ over
%$\mathcal{O}$,
%then $Y^\star$ is asymptotically stable and $\mathcal{E}$ is called
%a strict Lyapunov function.  
%\end{enumerate}
%\end{theorem}

\begin{theorem}[Lyapunov \cite{Smale}]
\label{th:Lyapunov.2}
Let $Y^\star$ be a critical point of the dynamical system \eqref{eq:dynamical}.
Also, let $\mathcal{O}\subseteq\mathbb{R}^n$ be an open 
set containing $Y^\star$ and
$\mathcal{E}: \mathcal{O} \to \mathbb{R}$ be a continuously differentiable
function. We have the following:
\begin{enumerate}
\item[(i)] if $\mathcal{E}(\cdot)$ satisfies
\begin{align}
&\mathcal{E}(Y^\star) = 0, \label{L.1.simple}\\
&\mathcal{E}(Y) > 0 \ \ 
\text{for all $Y \in \mathcal{O}\setminus Y^\star$,} \label{L.2.simple}\\
&\dot{\mathcal{E}}(Y) \leq 0 \ \ 
\text{for all $Y \in \mathcal{O}\setminus Y^\star$,}\label{L.3.simple}
\end{align}
then $Y^\star$ is stable and $\mathcal{E}$ is called
a Lyapunov function;
\item[(ii)] if instead of \eqref{L.3.simple} we have the strict inequality
\begin{equation}\label{L.4.simple}
\dot{\mathcal{E}}(Y) < 0 \ \ 
\text{for all $Y \in \mathcal{O}\setminus Y^\star$,}
\end{equation}
then $Y^\star$ is asymptotically stable and $\mathcal{E}$ is called
a strict Lyapunov function.  
\end{enumerate}
\end{theorem}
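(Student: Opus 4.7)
The strategy is to trap the trajectory in nested sublevel sets of $\mathcal{E}$ using compactness. To prove (i), given a neighborhood $\mathcal{O}$ of $Y^\star$, I would first choose $\varepsilon > 0$ so that the closed ball $B := \{Y : \|Y - Y^\star\| \leq \varepsilon\}$ lies in $\mathcal{O}$. Its boundary sphere $S$ is compact, and by \eqref{L.2.simple} the continuous function $\mathcal{E}$ achieves a strictly positive minimum $m := \min_{Y \in S} \mathcal{E}(Y) > 0$ on $S$. I would then define $\bar{\mathcal{O}} := \{Y \in B : \mathcal{E}(Y) < m\}$, which is an open neighborhood of $Y^\star$ (open by continuity, containing $Y^\star$ since $\mathcal{E}(Y^\star) = 0 < m$ by \eqref{L.1.simple}) contained in $\mathcal{O}$. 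For any $Y_0 \in \bar{\mathcal{O}}$, integrating \eqref{L.3.simple} along the trajectory yields $\mathcal{E}(Y(t)) \leq \mathcal{E}(Y_0) < m$ as long as the trajectory stays in $B$; since $Y(t)$ is continuous, it cannot escape $B$ without first crossing $S$ where $\mathcal{E} \ge m$, so it must remain in $B \subseteq \mathcal{O}$ for all $t > t_0$, establishing stability.

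For (ii), part (i) already guarantees that the trajectory remains in the compact ball $B$ forever. The map $t \mapsto \mathcal{E}(Y(t))$ is monotonically decreasing by \eqref{L.4.simple} and bounded below by zero, so it converges to some limit $L \geq 0$. The crucial step is to rule out $L > 0$. If $L > 0$, then by continuity and $\mathcal{E}(Y^\star) = 0$ there exists $\eta \in (0, \varepsilon)$ with $\mathcal{E}(Y) < L$ whenever $\|Y - Y^\star\| < \eta$, confining the trajectory to the compact annular region $K := \{Y : \eta \leq \|Y - Y^\star\| \leq \varepsilon\}$. On $K$, continuity of $\dot{\mathcal{E}}$ combined with \eqref{L.4.simple} yields a uniform bound $c := -\max_{Y \in K} \dot{\mathcal{E}}(Y) > 0$, so integration gives $\mathcal{E}(Y(t)) \leq \mathcal{E}(Y_0) - c(t-t_0) \to -\infty$, contradicting $\mathcal{E} \geq 0$. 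Hence $L = 0$, and any accumulation point $Y'$ of the bounded trajectory will satisfy $\mathcal{E}(Y') = 0$ by continuity, which via \eqref{L.2.simple} forces $Y' = Y^\star$, so $Y(t) \to Y^\star$.

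The hard part is the compactness step in (ii) that upgrades the \emph{pointwise} strict inequality $\dot{\mathcal{E}}(Y) < 0$ into the \emph{uniform} bound $\dot{\mathcal{E}}(Y) \leq -c$ on the annulus $K$; this is where \eqref{L.4.simple} is essential and is precisely why \eqref{L.3.simple} alone cannot deliver asymptotic stability. Once that uniform decay is in hand, the rest is a routine monotone-convergence argument combined with the positive-definiteness \eqref{L.2.simple} to upgrade $\mathcal{E}(Y(t)) \to 0$ to $Y(t) \to Y^\star$.
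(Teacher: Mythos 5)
The paper itself gives no proof of this statement: it is quoted as a classical result with a citation to Hirsch--Smale, so there is nothing internal to compare against. Your argument is the standard textbook proof of Lyapunov's direct method and it is correct: the sublevel-set trapping via the minimum of $\mathcal{E}$ on a compact sphere for part (i), and for part (ii) the monotone limit $L$ of $\mathcal{E}(Y(t))$ together with the compactness-of-the-annulus step that converts the pointwise inequality $\dot{\mathcal{E}}<0$ into a uniform decay rate $\dot{\mathcal{E}}\le -c$, forcing $L=0$ and hence convergence of every accumulation point to $Y^\star$. You have also correctly identified the only place where \eqref{L.4.simple} is genuinely needed beyond \eqref{L.3.simple}. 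Two small points worth making explicit if you write this up: (a) confinement to the compact ball $B$ is also what guarantees the solution extends to all $t>t_0$ (no finite-time blow-up), which the paper's Definition~\ref{th:stability} requires; and (b) the system \eqref{eq:dynamical} is non-autonomous, so $\dot{\mathcal{E}}(Y)=\langle\nabla\mathcal{E}(Y),F(Y,t)\rangle$ in principle depends on $t$ as well --- your maximum over the compact annulus $K$ tacitly assumes the bound is uniform in $t$, which is consistent with how the theorem is stated (with $\dot{\mathcal{E}}$ written as a function of $Y$ alone) but should be flagged if $F$ genuinely depends on time, as it does for the A-ADMM flow.
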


The drawback of Lyapunov's approach is that it requires 
knowing an appropriate $\mathcal{E}(\cdot)$; unfortunately, 
there is no systematic procedure for constructing such a function. 
Also, note that Lyapunov's criteria are sufficient but not necessary.

\section{Stability and Convergence Analysis}
\label{sec:damped_second_order}

In this section we analyze the stability properties and rates of 
convergence of the dynamical systems associated 
with both ADMM and A-ADMM.

\subsection{Analysis of the Dynamical System for ADMM} \label{sec:dyn.admm}

\subsubsection*{Asymptotic stability}

The asymptotic stability of the dynamical 
system~\eqref{eq:admm_ode} associated with ADMM 
follows from Theorem~\ref{th:Lyapunov.2} with an 
appropriately chosen Lyapunov function.
\begin{theorem}
\label{thm:as.admm}
Let $X^\star$ be a strict local minimizer and an isolated 
stationary point of $V(\cdot)$, i.e., 
there exists $\mathcal{O} \subseteq \mathbb{R}^n$ such 
that $X^\star\in \mathcal{O}$, $\nabla V(X) \neq 0$ 
for all $X\in\mathcal{O}\setminus X^\star$, and 
\begin{equation}\label{strict.and.isolated}
V(X) > V(X^\star) \ \ \text{for all $X\in\mathcal{O}\setminus X^\star$.} 
\end{equation}
Then, it follows that $X^\star$ is an asymptotically stable 
critical point of the ADMM flow~\eqref{eq:admm_ode}.
\end{theorem}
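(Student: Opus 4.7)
The plan is to apply Theorem~\ref{th:Lyapunov.2}(ii) with the natural energy function
\begin{equation}
\mathcal{E}(X) = V(X) - V(X^\star),
\end{equation}
and the open neighborhood $\mathcal{O}$ furnished by the hypothesis. So first I would verify that $X^\star$ really is a critical point of the ADMM flow \eqref{eq:admm_ode}, i.e. that the right-hand side $-(A^T A)^{-1} \nabla V(X^\star)$ vanishes. Since $V$ is convex and differentiable (Assumption~\ref{ass.basic}), any strict local minimizer is a stationary point, so $\nabla V(X^\star)=0$; together with the invertibility of $A^T A$ (full column rank of $A$), this gives $F(X^\star,t)\equiv 0$.

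Next I would check the three Lyapunov conditions in turn. The normalization $\mathcal{E}(X^\star)=0$ is immediate, and the strict positivity $\mathcal{E}(X)>0$ for $X\in\mathcal{O}\setminus X^\star$ is exactly the strict local minimizer hypothesis \eqref{strict.and.isolated}. The decisive computation is the time derivative along a trajectory of \eqref{eq:admm_ode}:
\begin{equation}
\dot{\mathcal{E}}(X) = \langle \nabla V(X), \dot X\rangle = -\big\langle \nabla V(X),\, (A^T A)^{-1}\nabla V(X)\big\rangle.
\end{equation}
Because $A$ has full column rank, $A^T A$ is symmetric positive definite, hence so is $(A^T A)^{-1}$; therefore the right-hand side is nonpositive and vanishes only when $\nabla V(X)=0$. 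The isolated stationary point hypothesis guarantees $\nabla V(X)\neq 0$ throughout $\mathcal{O}\setminus X^\star$, so $\dot{\mathcal{E}}(X)<0$ strictly on this punctured neighborhood, establishing condition \eqref{L.4.simple}.

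With all three hypotheses of Theorem~\ref{th:Lyapunov.2}(ii) in hand, $\mathcal{E}$ is a strict Lyapunov function and the conclusion that $X^\star$ is asymptotically stable for \eqref{eq:admm_ode} follows. I do not anticipate a genuine obstacle here: the structure of the flow as a preconditioned gradient descent on $V$ makes $V-V(X^\star)$ tailor-made as a Lyapunov function, and the only mildly delicate point is invoking the full column rank of $A$ to convert the nonpositive quadratic form $-\langle\nabla V,(A^TA)^{-1}\nabla V\rangle$ into a strictly negative quantity whenever $\nabla V\neq 0$; the isolatedness of $X^\star$ as a stationary point is what rules out the possibility of $\dot{\mathcal{E}}=0$ occurring at points other than $X^\star$ inside $\mathcal{O}$.
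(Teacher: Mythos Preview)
Your proposal is correct and follows essentially the same approach as the paper: the same Lyapunov function $\mathcal{E}(X)=V(X)-V(X^\star)$ and the same appeal to Theorem~\ref{th:Lyapunov.2}(ii). The only cosmetic difference is that the paper writes $\dot{\mathcal{E}}=-\|A\dot X\|^2$ (substituting for $\nabla V$) whereas you write $\dot{\mathcal{E}}=-\langle\nabla V,(A^TA)^{-1}\nabla V\rangle$ (substituting for $\dot X$); these are the same quantity, and both arguments use the full column rank of $A$ and the isolated-stationary-point hypothesis in the same way to obtain strict negativity.
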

\begin{proof}
Since $X^\star$ is a minimizer of $V(\cdot)$, 
it follows from first-order optimality conditions 
that $\nabla V(X^\star) = 0$. Combining this fact 
with Definition~\ref{th:stability} shows that $X^\star$ 
is a critical point of the dynamical system~\eqref{eq:admm_ode}.
To prove that $X^\star$ is asymptotically stable, let us define
\begin{equation}
\mathcal{E}(X) \equiv V(X) - V(X^\star)
\end{equation}
and observe from~\eqref{strict.and.isolated} that~\eqref{L.1.simple} 
and~\eqref{L.2.simple} hold.
Then, taking the total time derivative of $\mathcal{E}$ and using \eqref{eq:admm_ode} we have
\begin{equation} \label{dot.der.E}
\dot{\mathcal{E}}(X) = \big\langle \nabla V(X), \dot{X} \big\rangle = 
- \| A \dot{X} \|^2.
\end{equation}
Since $X^\star$ is assumed to be an isolated critical point, we know that
if $X\in\mathcal{O}\setminus X^\star$, then $\nabla V(X) \neq 0$, which in
light of~\eqref{eq:admm_ode} and Assumption~\ref{ass.basic} means that 
$\dot X \neq 0$.  Combining this conclusion with~\eqref{dot.der.E} and
Assumption~\ref{ass.basic} shows that if $X\in\mathcal{O}\setminus X^\star$,
then $\dot{\mathcal{E}}(X) < 0$, i.e., \eqref{L.4.simple} holds. 
Therefore, it follows from Theorem~\ref{th:Lyapunov.2} 
that $X^\star$ is an asymptotically stable
critical point of the dynamical 
system~\eqref{eq:admm_ode}.
\end{proof}

Some remarks concerning Theorem~\ref{thm:as.admm} are appropriate.
\begin{itemize}
\item If $V(\cdot)$ is strongly convex, then it has a 
unique minimizer.  Moreover, that unique minimizer will satisfy 
the assumptions of Theorem~\ref{thm:as.admm} with 
$\mathcal{O} = \Re^n$. Strong convexity of $V(\cdot)$ holds, 
for example, when either $f$ or $g$ is convex and the other 
is strongly convex (recall that $A$ has full column rank by assumption). 
Similar remarks also hold when $V(\cdot)$ is merely strictly convex.
\item If $X^\star$ satisfies the second-order sufficient optimality 
conditions for minimizing $V(\cdot)$, i.e., $\nabla V(X^\star) = 0$ 
and $\nabla^2 V(X^\star)$ is positive definite, then the 
assumptions of Theorem~\ref{thm:as.admm} will hold at $
X^\star$ for all sufficiently small neighborhoods 
$\mathcal{O}$ of $X^\star$.  
Note that in this case, the function $V(\cdot)$ need not be convex.
%\item Functions satisfying the Polyak-\L{}ojasiewicz (PL) condition~\cite{KariNutiSchm16} form a broader class of functions than the class of strongly convex functions. A consequence of satisfying the PL condition is that all stationary points are global minimizers (it does not imply a unique global minimizer).
\item It follows from~\eqref{dot.der.E} that $\dot{\mathcal{E}}(X) \leq 0$ 
for all $X$.  Thus, $X^\star$ will be stable (not necessarily 
asymptotically stable) without having to assume that $X^\star$ 
is an isolated stationary point of $V(\cdot)$.
\end{itemize}

\subsubsection*{Convergence rate}
For the dynamical system governing ADMM we are able to 
establish a convergence rate for how fast the objective 
function converges to its optimal value. 

\begin{theorem}
\label{rate.admm}
Let $X(t)$ be a trajectory of the ADMM flow~\eqref{eq:admm_ode},
with initial condition $X(t_0)=x_0$. Assume that $\argmin V \ne \emptyset$ and
let $V^\star \equiv \min_x V(x)$. 
Then, 
there is a constant $C > 0$ such that
\begin{equation}
V(X(t)) - V^\star \le \dfrac{C}{t} .
\end{equation}
\end{theorem}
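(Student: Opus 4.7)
The plan is to follow the standard Lyapunov approach used for gradient flow but adapted to the weighted geometry induced by $M := A^T A$. Since the ADMM flow can be written as $M\dot X = -\nabla V(X)$, the natural choice is to measure distances in the $M$-norm. Fix any $X^\star \in \argmin V$, which exists by assumption, and consider the candidate Lyapunov function
\begin{equation*}
\mathcal{E}(X,t) = t\bigl(V(X) - V^\star\bigr) + \tfrac{1}{2}\,\langle X - X^\star,\, M(X - X^\star)\rangle,
\end{equation*}
which is essentially the classical gradient-flow Lyapunov function with the Euclidean squared distance replaced by the $M$-weighted squared distance $\|A(X-X^\star)\|^2$.

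Next I would differentiate $\mathcal{E}$ along a trajectory $X(t)$ of~\eqref{eq:admm_ode}. Using $M\dot X = -\nabla V(X)$, a direct computation should give
\begin{equation*}
\dot{\mathcal{E}}(X,t) = \bigl(V(X) - V^\star\bigr) - t\,\bigl\langle \nabla V(X),\, M^{-1}\nabla V(X)\bigr\rangle - \bigl\langle \nabla V(X),\, X - X^\star\bigr\rangle.
\end{equation*}
Here I would invoke convexity of $V$ (which holds since $f$ and $g$ are convex and $V(x) = f(x) + g(Ax)$) in the form of the subgradient inequality $V(X) - V^\star \le \langle \nabla V(X), X - X^\star\rangle$. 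This cancels the first and third terms on the right, leaving
\begin{equation*}
\dot{\mathcal{E}}(X,t) \le -t\,\bigl\langle \nabla V(X),\, M^{-1}\nabla V(X)\bigr\rangle \le 0,
\end{equation*}
where the final inequality uses that $M^{-1} = (A^T A)^{-1}$ is positive definite (Assumption~\ref{ass.basic}).

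From $\dot{\mathcal{E}} \le 0$ we obtain $\mathcal{E}(X(t),t) \le \mathcal{E}(x_0,t_0)$ for all $t \ge t_0$. Discarding the nonnegative $M$-norm term on the left yields
\begin{equation*}
t\bigl(V(X(t)) - V^\star\bigr) \le \mathcal{E}(x_0,t_0) = t_0\bigl(V(x_0) - V^\star\bigr) + \tfrac{1}{2}\|A(x_0 - X^\star)\|^2,
\end{equation*}
so the claim holds with $C$ equal to the right-hand side (or any upper bound on it, times a harmless factor to absorb the $t \ge t_0$ regime).

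The main obstacle is really just guessing the correct Lyapunov function: the ADMM flow is gradient descent in the $M$-metric rather than the Euclidean one, so the quadratic term must be taken in the $M$-norm, not the Euclidean norm, in order for the cross terms in $\dot{\mathcal{E}}$ to cancel cleanly against the convexity inequality. Once the function is written down, the rest of the argument is a short algebraic manipulation followed by an application of convexity. A minor bookkeeping point is handling the initial time $t_0$: if $t_0 = 0$ the constant reduces to the initial weighted distance to $X^\star$, while for $t_0 > 0$ the $t_0(V(x_0) - V^\star)$ term must be retained, but either way the resulting bound is $O(1/t)$ for $t$ large.
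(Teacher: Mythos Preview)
Your proof is correct and is essentially identical to the paper's: the same Lyapunov function $\mathcal{E}(X,t)=t(V(X)-V^\star)+\tfrac{1}{2}\|A(X-X^\star)\|^2$ is used, and the derivative computation and convexity step are the same up to the cosmetic choice of writing $-t\langle \nabla V, M^{-1}\nabla V\rangle$ instead of the equivalent $-t\|A\dot X\|^2$.
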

\begin{proof}
Let $X^\star \in \argmin V$, thus $V(X^\star)=V^\star$.   
Consider
\begin{equation}
\label{eq:lyap_admm1}
\mathcal{E}(X,t) \equiv t \left[ V(X) - V(X^\star)  \right] + 
\tfrac{1}{2}\big\| A\left(X - X^\star\right)\big\|^2.
\end{equation}
By taking the total time derivative of $\mathcal{E}$, using
\eqref{eq:admm_ode}, and then 
the convexity of $V(\cdot)$, we find that
\begin{equation}
\begin{split}
\dot{\mathcal{E}} &= 
t \big\langle \nabla V(X), \dot{X} \big\rangle
+ V(X) - V(X^\star) +
\big\langle X - X^\star, A^T A \, \dot{X} \big\rangle \\
&= - t \big\| A\dot{X} \big\|^2 + V(X) - V(X^\star) + 
\big\langle X^\star - X, \nabla V(X) \big\rangle  \\
&\leq 0,
\end{split}
\end{equation}
from which we may conclude that 
$\mathcal{E}(X,t) \le \mathcal{E}(X_0,t_0)$ for all $t \geq t_0$.  
Combining this with the definition of $\mathcal{E}$ gives 
\begin{equation}
\begin{split}
\label{eq:rate_admm}
V(X) - V(X^\star)
&= \frac{1}{t}\mathcal{E}(X,t) - \frac{1}{2t} 
\| A\left(X - X^\star\right)\|^2  \\
&\leq \frac{\mathcal{E}(X_0,t_0)}{t},
\end{split}
\end{equation}
where we note that $\mathcal{E}(X_0,t_0) \geq 0$ since $V(\cdot)$ is convex.
\end{proof}

Some remarks concerning Theorem~\ref{rate.admm} are warranted.
\begin{itemize}
\item 
Theorem~\ref{rate.admm} holds under the 
assumption that $f$ and $g$ are convex.  
This is a strength compared with Theorem~\ref{thm:as.admm}, 
which has to make relatively strong assumptions about 
the critical point. Under those stronger assumptions, 
however, Theorem~\ref{thm:as.admm} gives a convergence 
result for the state $X$, whereas Theorem~\ref{rate.admm} 
only guarantees convergence of the objective
value.  
\item 
The $O(1/t)$ rate promised by Theorem~\ref{rate.admm} for 
the dynamical system~\eqref{eq:admm_ode} associated with ADMM 
agrees with the 
rate $O(1 / k)$ of ADMM when $V(\cdot)$ is  
assumed to be convex~\cite{Eckstein:2015,He:2012}.
\end{itemize}

\subsection{Analysis of the Dynamical System for A-ADMM}

\subsubsection*{Stability}
A stability result for the dynamical system~\eqref{eq:fast_admm_ode} 
associated with A-ADMM can be established by combining 
Theorem~\ref{th:Lyapunov.2} with an appropriately chosen Lyapunov function.

Let $Y_1=X$ and $Y_2 = \dot{X}$, and denote $Y=(Y_1,Y_2)$. Thus,
we are able to 
write the second-order dynamical system~\eqref{eq:fast_admm_ode} as 
the following system of first-order differential equations:
\begin{equation} 
\label{sys.equivalent}
\dfrac{d}{dt} \begin{pmatrix} Y_1 \\ Y_2 \end{pmatrix} = 
\begin{pmatrix} Y_2 \\ -\tfrac{r}{t} Y_2 - 
(A^T A)^{-1} \nabla V(Y_1) \end{pmatrix}.
\end{equation}
We can now give conditions on minimizers $X^\star$ of $V(\cdot)$ 
that ensure that $Y^\star = (X^\star,0)$ is a stable 
critical point for~\eqref{sys.equivalent}.

\begin{theorem} 
\label{thm:stabile.aadmm}
If $X^\star$ be a strict local minimizer
of $V(\cdot)$, i.e., there 
exists $\mathcal{O} \subseteq \mathbb{R}^n$ such 
that $X^\star\in \mathcal{O}$
and 
\begin{equation}\label{strict.and.isolated.2}
V(X) > V(X^\star) \ \ \text{for all $X\in\mathcal{O}\setminus X^\star$,} 
\end{equation}
then $Y^\star = (X^\star,0)$ 
is a stable critical point for the 
dynamical system~\eqref{sys.equivalent}, which is equivalent to 
the A-ADMM flow~\eqref{eq:fast_admm_ode}.
\end{theorem}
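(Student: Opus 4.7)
The plan is to apply Theorem~\ref{th:Lyapunov.2}(i) to the first-order system~\eqref{sys.equivalent} using a Lyapunov function built from the standard ``total energy'' of the dynamics. First I would verify that $Y^\star = (X^\star, 0)$ is indeed a critical point of~\eqref{sys.equivalent}: the first component of the vector field gives $Y_2 = 0$ trivially, and the second gives $-\tfrac{r}{t}\cdot 0 - (A^T A)^{-1}\nabla V(X^\star)$, which vanishes for all $t > 0$ because $\nabla V(X^\star)=0$ follows from $X^\star$ being a local minimizer.

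Next, I would introduce the candidate Lyapunov function
\begin{equation*}
\mathcal{E}(Y) \equiv V(Y_1) - V(X^\star) + \tfrac{1}{2}\|A Y_2\|^2,
\end{equation*}
regarded as a function on $\mathcal{O}\times\Re^n$. Condition~\eqref{L.1.simple} is immediate. For~\eqref{L.2.simple}, if $Y_1 \ne X^\star$ then $V(Y_1) > V(X^\star)$ by~\eqref{strict.and.isolated.2} and the kinetic term is nonnegative, while if $Y_1 = X^\star$ but $Y_2 \ne 0$ then $\|A Y_2\|^2 > 0$ because $A$ has full column rank by Assumption~\ref{ass.basic}; either way $\mathcal{E}(Y)>0$.

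The crux is computing $\dot{\mathcal{E}}$ along trajectories of~\eqref{sys.equivalent}. Using $\dot{Y}_1=Y_2$ and $\dot{Y}_2 = -\tfrac{r}{t}Y_2 - (A^T A)^{-1}\nabla V(Y_1)$,
\begin{equation*}
\dot{\mathcal{E}} = \langle \nabla V(Y_1), Y_2\rangle + Y_2^T A^T A\, \dot{Y}_2 = -\tfrac{r}{t}\|A Y_2\|^2,
\end{equation*}
since the $\langle \nabla V(Y_1), Y_2\rangle$ terms cancel exactly. For $t > 0$ and $r\geq 3$ this gives $\dot{\mathcal{E}} \leq 0$, which is~\eqref{L.3.simple}. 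Hence Theorem~\ref{th:Lyapunov.2}(i) applies and $Y^\star$ is stable.

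The main subtlety is that the system is nonautonomous and the coefficient $r/t$ is singular at $t=0$; this is handled by treating the dynamics on $t\geq t_0 > 0$, which is consistent with the initial-value problem given in Theorem~\ref{th:fast_admm_ode}. A secondary point worth flagging is that the inequality for $\dot{\mathcal{E}}$ is not strict---it vanishes whenever $Y_2 = 0$ regardless of $Y_1$---so this argument gives only stability, not asymptotic stability, which is consistent with the statement of Theorem~\ref{thm:stabile.aadmm} and with the known behavior of the analogous A-GD flow.
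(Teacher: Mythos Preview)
Your proposal is correct and follows essentially the same approach as the paper: the same ``total energy'' Lyapunov function $\mathcal{E}(Y)=V(Y_1)-V(X^\star)+\tfrac{1}{2}\|AY_2\|^2$, the same derivative computation yielding $\dot{\mathcal{E}}=-\tfrac{r}{t}\|AY_2\|^2\le 0$, and the same appeal to Theorem~\ref{th:Lyapunov.2}(i). If anything, your verification of~\eqref{L.2.simple} is slightly more careful than the paper's, since you explicitly invoke the full column rank of $A$ to handle the case $Y_1=X^\star$, $Y_2\ne 0$.
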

\begin{proof}
Since $X^\star$ is a minimizer of $V(\cdot)$, it follows from 
first-order optimality conditions that $\nabla V(X^\star) = 0$. 
Combining this with Definition~\ref{th:stability} 
shows that $Y^\star = (X^\star,0)$ is a critical 
point of the first-order dynamical system~\eqref{sys.equivalent}.

Next, we prove that $Y^\star = (X^\star,0)$ is stable.
Let $\mathcal{O}\subseteq \Re^n$ 
and define $\mathcal{E}:\mathcal{O}  
\to \Re$ as
\begin{equation}
\label{eq:energy1}
\mathcal{E}(Y) = \tfrac{1}{2} \| A Y_2 \|^2 + V(Y_1) - V(X^\star).
\end{equation}
Note that 
$\mathcal{E}(Y^\star) = 0$, 
i.e., condition \eqref{L.1.simple} holds.  Also,
since $X^\star$ is isolated,
$\mathcal{E}(Y) > 0$ for all $Y \ne Y^\star$, so that \eqref{L.2.simple}
holds.
If we take the total time derivative of \eqref{eq:energy1} we obtain
\begin{equation}
\begin{split}
\dot{\mathcal{E}}
&= \big\langle \nabla_{Y_1} \mathcal{E}, \dot{Y}_1 \big\rangle
+\big\langle \nabla_{Y_2} \mathcal{E}, {\dot{Y}_2} \big\rangle 
 \\
&= \big\langle \nabla V(Y_1), Y_2 \big\rangle
-\left\langle A^T A Y_2, \dfrac{r}{t} Y_2 + (A^T A)^{-1} 
\nabla V(Y_1) \right\rangle  
 \\ 
&= -\tfrac{r}{t} \| A Y_2 \|^2  .
\end{split}
\end{equation}
Thus, $\dot{\mathcal{E}}(Y) \le 0$ for all $Y \in{\mathcal{O}}$, 
i.e., \eqref{L.3.simple} 
holds.  
This implies that
$Y^\star = (X^\star,0)$ is stable, as claimed.
\end{proof}

We remark that the discussions in the first two bullet 
points of the subsection ``Asymptotic stability" 
in Section~\ref{sec:dyn.admm} also apply to 
Theorem~\ref{thm:stabile.aadmm}.  We do not repeat them for brevity. 
We also note that the stability of system 
\eqref{eq:nesterov_ode} was not 
considered by \citet{Candes:2016,Wibisono:2016,Krichene:2015,Wilson:2016}.
In contrast,  Theorems \ref{thm:as.admm} and \ref{thm:stabile.aadmm} provide
a simple argument for the stability
of \eqref{eq:admm_ode}~and~\eqref{eq:fast_admm_ode}, respectively, based only on
Theorem~\ref{th:Lyapunov.2}. However, contrary to the first-order 
ADMM flow~\eqref{eq:admm_ode},
it is not obvious how to apply Theorem~\ref{th:Lyapunov.2} to
the second-order A-ADMM flow~\eqref{eq:fast_admm_ode}
to obtain asymptotic stability 
without further assumptions on the critical point.
However, we give a case where 
asymptotic stability holds
in the end of this section.

Let us mention existing results regarding
the convergence of trajectories of the system \eqref{eq:nesterov_ode}
when $X(t)$ is an 
element of a Hilbert space.
Convergence of trajectories for convex and even some particular
cases of non-convex $f(\cdot)$ was studied in \citet{Cabot:2009}.
If $r > 3$
and $\argmin f \ne \emptyset$,
then the trajectory of the system weakly converges to 
some minimizer of $f(\cdot)$, even in the presence of small 
perturbations \cite{Attouch:2016,May:2016}.
These results should extend naturally to \eqref{eq:fast_admm_ode}, but
we avoided diving in this direction
since it would deviate from our main goal. 
It is important to note, however, that convergence of the trajectories
do not necessarily imply stability.

\subsubsection*{Convergence rate}
We now consider the convergence rate of the dynamical system 
\eqref{eq:fast_admm_ode}.% associated to A-ADMM.

\begin{theorem}
\label{th:rate_damped_gradient}
Let $X(t)$ be a trajectory of the A-ADMM flow~\eqref{eq:fast_admm_ode},
with initial conditions $X(t_0)=x_0$ and $\dot{X}(t_0)=0$.
Assume that $\argmin V \ne \emptyset$ and let $V^\star \equiv \min_x V(x)$.
If $r\ge 3$, then there is some constant $C > 0$ such that
\begin{equation}
\label{eq:rate_damped_gradient}
V(X(t)) - V^\star \le \dfrac{C}{t^2}.
\end{equation}
\end{theorem}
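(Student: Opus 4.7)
My plan is to adapt the Lyapunov-function argument of Su--Boyd--Cand\`es for Nesterov's ODE to the A-ADMM flow, absorbing the extra factor $(A^TA)^{-1}$ into the quadratic part of the Lyapunov function by measuring it in the $A^TA$-weighted (equivalently, $\|A\cdot\|$) norm. Concretely, I would introduce
\begin{equation}
\mathcal{E}(X,t) = t^{2}\bigl(V(X)-V^\star\bigr) + \tfrac{(r-1)^{2}}{2}\Bigl\|A(X-X^\star) + \tfrac{t}{r-1}A\dot{X}\Bigr\|^{2},
\end{equation}
where $X^\star \in \argmin V$ and $V^\star = V(X^\star)$.

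The first step is to verify that $\mathcal{E}(t_0)\ge 0$, which is immediate since $V(x_0)-V^\star \ge 0$ and the squared norm is nonnegative (note $\dot X(t_0)=0$, so in fact $\mathcal{E}(t_0) = t_0^{2}(V(x_0)-V^\star)+\tfrac{(r-1)^2}{2}\|A(x_0-X^\star)\|^2$). Next, I would compute $\dot{\mathcal{E}}$ along any trajectory of~\eqref{eq:fast_admm_ode}. The key algebraic simplification is that, writing $u \equiv X-X^\star+\tfrac{t}{r-1}\dot X$, the identity $\dot u = \tfrac{r}{r-1}\dot X + \tfrac{t}{r-1}\ddot X$ combines with~\eqref{eq:fast_admm_ode} (which says $\ddot X = -\tfrac{r}{t}\dot X - (A^TA)^{-1}\nabla V(X)$) to give $\dot u = -\tfrac{t}{r-1}(A^TA)^{-1}\nabla V(X)$. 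Consequently,
\begin{equation}
\tfrac{d}{dt}\Bigl[\tfrac{(r-1)^{2}}{2}\langle A^TA\,u,u\rangle\Bigr]
= -(r-1)t\,\langle X-X^\star,\nabla V(X)\rangle - t^{2}\langle \dot X,\nabla V(X)\rangle,
\end{equation}
and the $(A^TA)^{-1}$ and $A^TA$ cancel cleanly. Adding the derivative of $t^{2}(V-V^\star)$ yields
\begin{equation}
\dot{\mathcal{E}} = 2t\bigl(V(X)-V^\star\bigr) - (r-1)t\,\langle X-X^\star,\nabla V(X)\rangle.
\end{equation}

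The third step is to invoke convexity. Since $f$ and $g$ are convex and $A$ is linear, the composite $V(x)=f(x)+g(Ax)$ is convex, giving $\langle X-X^\star,\nabla V(X)\rangle \ge V(X)-V^\star$. Substituting, $\dot{\mathcal{E}} \le -(r-3)t(V(X)-V^\star) \le 0$ whenever $r\ge 3$ and $t\ge t_0>0$. Hence $\mathcal{E}(t)\le \mathcal{E}(t_0)$ for all $t\ge t_0$, and dropping the nonnegative squared-norm term gives $V(X(t))-V^\star \le \mathcal{E}(t_0)/t^{2}$, so $C = \mathcal{E}(t_0)$ works.

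The main obstacle is really the guess of the correct Lyapunov function: the naive candidate used for~\eqref{eq:nesterov_ode} in the Cand\`es--Su--Boyd analysis fails because the cross terms involve $\nabla V(X)$ multiplied by $\dot u$, and without the $(A^TA)^{-1}$ sitting inside $\dot u$ (which is precisely what the weighting by $A^TA$ in the quadratic provides) the cancellation does not occur. Once the weighting is recognized, the remaining computation is a direct analogue of the unweighted case, and the condition $r\ge 3$ enters in exactly the same way. I would also briefly remark (paralleling the remarks after Theorem~\ref{rate.admm}) that this matches the known $O(1/k^{2})$ rate of A-ADMM under stronger assumptions and recovers the $O(1/t^{2})$ rate of~\eqref{eq:nesterov_ode} when $A=I$.
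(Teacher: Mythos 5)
Your proposal is correct and follows essentially the same route as the paper: your Lyapunov function is exactly $(r-1)^2$ times the one the paper uses (the paper writes it as $e^{\eta}[V(Y_1)-V^\star]+\tfrac{1}{2}\|A(Y_1-X^\star+e^{\eta/2}Y_2)\|^2$ with $\eta(t)=2\log\bigl(\tfrac{t}{r-1}\bigr)$, i.e.\ $e^{\eta}=t^2/(r-1)^2$), and the same $A^TA$-weighting, cancellation against $(A^TA)^{-1}$, and convexity step yield the identical bound $\dot{\mathcal{E}}\le -(r-3)t\,(V-V^\star)/(r-1)^2$ up to that constant factor.
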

\begin{proof}
Following \citet{Candes:2016,Wibisono:2016}, we define
$\eta:[t_0,\infty) \to \Re$ as $\eta(t) = 2\log\left(\tfrac{ t}{r-1}\right)$ 
and
\begin{equation}
\mathcal{E}(Y,t) = e^{\eta}
\big[ V(Y_1) - V(X^\star) \big]  
+\tfrac{1}{2}\big\| A\big( Y_1 - X^\star + 
e^{\eta/2} Y_2 \big) \big\|^2
\end{equation}
where $X^\star$ is any minimizer of $V(\cdot)$, and thus 
$V(X^\star) = V^\star$.
Note that $\mathcal{E} \ge 0$ and its total time derivative is given by
\begin{equation}
\begin{split}
\dot{\mathcal{E}} &= 
\langle \nabla_{Y_1} \mathcal{E}, \dot{Y}_1 \rangle
+\langle \nabla_{Y_2} \mathcal{E}, \dot{Y}_2 \rangle
+ \partial_t \mathcal{E} \\
&= \left\langle \nabla_{Y_1} \mathcal{E} - 
\tfrac{r}{t}\nabla_{Y_2} \mathcal{E}, Y_2 \right\rangle
-\langle \nabla_{Y_2} \mathcal{E}, (A^TA)^{-1}\nabla_{Y_1} V \rangle
+ \partial_t \mathcal{E}
\end{split}
\end{equation}
where we made use of \eqref{sys.equivalent}.
Observe that
\begin{align}
\nabla_{Y_1} \mathcal{E} &= e^\eta \nabla_{Y_1} V + 
A^TA \big( Y_1 - X^\star + e^{\eta/2}Y_2 \big), \\
\nabla_{Y_2} \mathcal{E} &= e^{\eta/2} 
A^TA \big( Y_1 - X^\star + e^{\eta/2}Y_2  \big), \\
\partial_t \mathcal{E} &= \dot{\eta} e^{\eta}\big(V(Y_1) - V(X^\star) \big)  
 + \tfrac{\dot\eta}{2} e^{\eta/2} \big\langle Y_1 - X^\star + 
e^{\eta/2} Y_2, A^T A \, Y_2
\big\rangle,
\end{align}
and also that
\begin{equation}\label{key.id}
%\mathcal{E} \ge 0
%\ \ \text{and} \ \ 
e^{-\eta/2} + \tfrac{1}{2}{\dot\eta}
= \tfrac{r}{t}.
\end{equation}
Therefore, using the convexity of $V(\cdot)$ we obtain
\begin{equation}
\begin{split}
\dot{\mathcal{E}} &= \dot{\eta}e^{\eta} \big(V(Y_1) - V(X^\star)\big)
- e^{\eta / 2} \langle Y_1 - X^\star, \nabla V \rangle \\
&\le - e^{\eta/2}(1 - \dot{\eta} e^{\eta/2}) 
\left(V(Y_1) - V(X^\star)\right) \\
&= - \dfrac{t(r-3)}{(r-1)^2} \big(V(Y_1) - V(X^\star)\big) ,
\end{split}
\end{equation}
so that $\dot{\mathcal{E}}\le 0$, implying
$\mathcal{E}(Y,t) \le \mathcal{E}((X(t_0), \dot{X}(t_0)), t_0)$.
By the definition of $\mathcal{E}(\cdot)$ we thus have
\begin{equation}
\label{eq:rate_damped_gradient2}
\begin{split}
V(Y_1) - V(X^\star) &\le e^{-\eta}
\mathcal{E}(Y,t) \\
&\le e^{-\eta} \mathcal{E}(X(t_0), \dot{X}(t_0), t_0).
\end{split}
\end{equation}
To conclude the proof, observe that $e^{\eta} = {t}^2/(r-1)^2$.
\end{proof}

Theorem~\ref{th:rate_damped_gradient}
suggests that
A-ADMM has a convergence rate of $O\big(1/k^2\big)$ 
for convex functions. This agrees with 
the result by \citet{Goldstein:2014}, which
assumes strong convexity of both $f$ and $g$, and also that
$g$ is quadratic; see \eqref{eq:minimize}.
Moreover, \citet{Goldstein:2014}
do not bound
the objective function as in \eqref{eq:rate_damped_gradient} 
but the combined residuals.
A convergence rate of $O(1/k^2)$ was also obtained for an accelerated
variant of the Douglas-Rachford splitting method for problem 
$\min_x f(x) + g(x) $ when both $f$ and $g$ are convex and $f$ is
quadratic \cite{Patrinos:2014}.
To the best of our knowledge, there is no
$\mathcal{O}(1/k^2)$ convergence proof for A-ADMM assuming only
convexity. It would be interesting to consider the convergence rate
of A-ADMM directly through a discrete analog of the 
Lyapunov function used in the above theorem, in the same spirit as
\citet{Candes:2016} considered for A-GD
and more recently \citet{Attouch:2016} considered for a perturbed version
of A-GD.

\subsubsection*{Asymptotic stability}
Under stronger conditions than in Theorem~\ref{thm:as.admm},
we have asymptotic stability of the dynamical system
\eqref{eq:fast_admm_ode} associated with A-ADMM.

\begin{theorem}
\label{th:asymptotic_fast_admm}
Let $X^\star \in \mathcal{O}$, for some 
$\mathcal{O}\subseteq \mathbb{R}^n$, 
be a local minimizer
of $V(\cdot)$ satisfying 
\begin{equation}
\label{eq:forcing}
V(X) - V(X^\star) \ge \phi(\| X - X^\star \|) \qquad
\mbox{for all $X \in \mathcal{O}$},
\end{equation}
where $\phi:[0, \infty)\to[0, \infty)$ is a forcing 
function \cite{Ortega} such that
for any %sequence 
$\{ \xi_k \} \subset [0, \infty)$,
$\lim_{k\to\infty}\phi(\xi_k) = 0$ implies 
$\lim_{k\to\infty} \xi_k = 0$.
Moreover, suppose that the conditions of Theorem~\ref{th:rate_damped_gradient}
hold over $\mathcal{O}$.
Then, it follows that $Y^\star = (X^\star, 0)$ is an asymptotically stable
critical point of the dynamical system
\eqref{sys.equivalent}, 
which is equivalent to the A-ADMM flow~\eqref{eq:fast_admm_ode}.
\end{theorem}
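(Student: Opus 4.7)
The plan is to combine the stability of $Y^\star=(X^\star,0)$ established in Theorem~\ref{thm:stabile.aadmm} with an explicit convergence argument $Y(t)\to Y^\star$ to upgrade stability to asymptotic stability. First, observe that the defining property of a forcing function forces $\phi(\xi)>0$ for every $\xi>0$ (otherwise the constant sequence $\xi_k\equiv\xi$ would give $\phi(\xi_k)\to 0$ without $\xi_k\to 0$), so~\eqref{eq:forcing} implies $V(X)>V(X^\star)$ for all $X\in\mathcal{O}\setminus\{X^\star\}$. Hence $X^\star$ is a strict local minimizer and Theorem~\ref{thm:stabile.aadmm} already yields the stability part. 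In particular, shrinking the basin if necessary, I may take the initial condition close enough to $Y^\star$ that the resulting trajectory satisfies $X(t)\in\mathcal{O}$ for all $t\geq t_0$.

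Next, since $V$ is convex (Assumption~\ref{ass.basic}) any local minimizer is global, so $V(X^\star)=V^\star$ and Theorem~\ref{th:rate_damped_gradient} gives
\[
\phi\bigl(\|X(t)-X^\star\|\bigr) \leq V(X(t))-V(X^\star) \leq \frac{C}{t^2}\xrightarrow[t\to\infty]{}0.
\]
Applying the forcing property of $\phi$ to the sequence $\xi(t)=\|X(t)-X^\star\|$ yields $X(t)\to X^\star$, handling the position component of $Y(t)$.

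The remaining and most delicate step is to show $\dot X(t)\to 0$. I would exploit the Lyapunov function
\[
\mathcal{E}(Y,t) = e^{\eta}\bigl[V(Y_1)-V(X^\star)\bigr] + \tfrac{1}{2}\bigl\|A\bigl(Y_1-X^\star+e^{\eta/2}Y_2\bigr)\bigr\|^2
\]
from the proof of Theorem~\ref{th:rate_damped_gradient}, which is non-increasing along the trajectory and hence bounded by $\mathcal{E}_0 \equiv \mathcal{E}(Y(t_0),t_0)$. Since both summands are nonnegative, $\tfrac{1}{2}\|A(X-X^\star+e^{\eta/2}\dot X)\|^2\leq \mathcal{E}_0$, and the triangle inequality gives
\[
e^{\eta/2}\|A\dot X(t)\| \leq \sqrt{2\mathcal{E}_0} + \|A(X(t)-X^\star)\|.
\]
The right-hand side stays bounded while $e^{\eta/2}=t/(r-1)\to\infty$, so $\|A\dot X(t)\|=O(1/t)\to 0$. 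Because $A$ has full column rank, $A^TA$ is positive definite and $\|\dot X\|^2\leq \|A\dot X\|^2/\lambda_{\min}(A^TA)$, hence $\dot X(t)\to 0$. Combining this with $X(t)\to X^\star$ yields $Y(t)\to Y^\star$, which together with the stability from Theorem~\ref{thm:stabile.aadmm} gives asymptotic stability. The main obstacle is precisely this last step: the $O(1/t^2)$ objective rate controls $X(t)$ only through the forcing function, and squeezing out decay of the velocity requires the sharper joint bound encoded in $\mathcal{E}(Y,t)$ together with the full-column-rank hypothesis on $A$.
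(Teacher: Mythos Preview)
Your proposal is correct and follows essentially the same route as the paper: stability from Theorem~\ref{thm:stabile.aadmm}, convergence $X(t)\to X^\star$ via the $O(1/t^2)$ rate combined with the forcing-function hypothesis, and $\dot X(t)\to 0$ extracted from the boundedness of the Lyapunov function $\mathcal{E}(Y,t)$ of Theorem~\ref{th:rate_damped_gradient}. In fact you are slightly more careful than the paper in two places---you justify explicitly why~\eqref{eq:forcing} makes $X^\star$ a \emph{strict} local minimizer (so that Theorem~\ref{thm:stabile.aadmm} indeed applies), and you invoke the full column rank of $A$ to pass from $\|A\dot X(t)\|\to 0$ to $\|\dot X(t)\|\to 0$, a step the paper leaves implicit.
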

\begin{proof}
Consider \eqref{eq:forcing} over a trajectory $X = X(t)$. Using 
\eqref{eq:rate_damped_gradient} and~\eqref{eq:forcing} we have
$\lim_{t\to\infty} \phi(\| X(t) - X^\star\|) = 0$, 
which combined with the properties of the forcing function gives
\begin{equation}
\label{eq:conv_traj}
\lim_{t\to\infty}\| X(t) - X^\star\| = 0.
\end{equation}
Denote $Y(t)= (Y_1(t), Y_2(t)) = (X(t), \dot{X}(t))$. From the proof
of Theorem~\ref{th:rate_damped_gradient}, i.e., the definition of
$\mathcal{E}(\cdot)$ and $\dot{\mathcal{E}} \le 0$, we also have that
$\| A(Y_1 - X^\star) + e^{\eta/2} Y_2 \|^2 \le C$, where
$C = \mathcal{E}(X(t_0), \dot{X}(t_0))$, hence
\begin{equation}
e^{\eta(t)} \| Y_2(t)\|^2 \le C + \| A(Y_1(t) - X^\star) \|^2 .
\end{equation}
This implies that $\lim_{t\to\infty} \| Y_2(t) \| = 0$
upon using \eqref{eq:conv_traj}. 

We showed that $\lim_{t\to\infty}Y(t) = (X^\star,0)$.
From Theorem~\ref{thm:as.admm} we already know that $Y^\star  =
(X^\star, 0)$ is a stable critical point of \eqref{sys.equivalent}.
From these two facts, and Definition~\ref{th:stability}, we thus
conclude that $Y^\star$ is asymptotically stable, as claimed.
\end{proof}

Condition \eqref{eq:forcing} holds,
for instance,
for both uniformly convex functions and strongly convex functions.

\section{A Numerical Example}

We numerically verify that the differential 
equations \eqref{eq:admm_ode}
and \eqref{eq:fast_admm_ode} accurately model ADMM and A-ADMM,
respectively, when $\rho$ is large as needed to derive the
continuous limit. The numerical integration of the first-order system
\eqref{eq:admm_ode} is straightforward; we use a 4th order Runge-Kutta
method (an explicit Euler method could also be employed). 
The numerical integration of \eqref{eq:fast_admm_ode}
is more challenging due to strong oscillations. To obtain a 
faithful discretization
of the continuous dynamical system \eqref{eq:fast_admm_ode}, i.e.,
one that preserves its properties, 
a standard approach is to use a Hamiltonian symplectic integrator, which
is designed to preserve the phase-space volume.
Consider the Hamiltonian
\begin{equation}
\label{eq:ham}
\mathcal{H} \equiv \tfrac{1}{2}e^{-\xi(t)} \big\langle P, (A^T A)^{-1} P
\big\rangle + e^{\xi(t)} V(X), 
\end{equation}
where $\xi(t) \equiv r \log t$ and $P = e^{\xi} (A^T A) \dot{X}$ 
is the canonical
momentum. Hamilton's equations are given by
\begin{equation}
\label{eq:ham_eq}
\dot{X} = \nabla_P \mathcal{H}
\qquad \text{and}  \qquad
\dot{P} = -\nabla_X \mathcal{H}.
\end{equation}
One can check that \eqref{eq:ham_eq} together with \eqref{eq:ham} 
is equivalent
to \eqref{eq:fast_admm_ode}. The simplest scheme is the symplectic Euler
method, %which for this case is 
which for equations \eqref{eq:ham_eq} with \eqref{eq:ham} is 
given explicitly as
\begin{subequations}
\label{eq:symp_euler}
\begin{align}
p_{k+1} &= p_k - h e^{\xi(t_k)} \nabla V(x_k), \\
x_{k+1} &= x_k + h e^{-\xi(t_k)} \big(A^T A\big)^{-1} p_{k+1}, \\
t_{k+1} &= t_k + h,
\end{align}
\end{subequations}
where $h > 0$ is the step size. Thus, we compare the iterates
\eqref{eq:symp_euler} with the A-ADMM algorithm. A simple
example is provided in Figure~\ref{fig:numerical}, which illustrate
our theoretical results.
%We can see that
%numerical solutions of the continuous dynamical systems are close
%to the algorithms.
%, and that Theorem~\ref{th:rate_admm} and
%Theorem~\ref{th:rate_damped_gradient} hold.

\begin{figure}
\centering
\includegraphics[scale=.7]{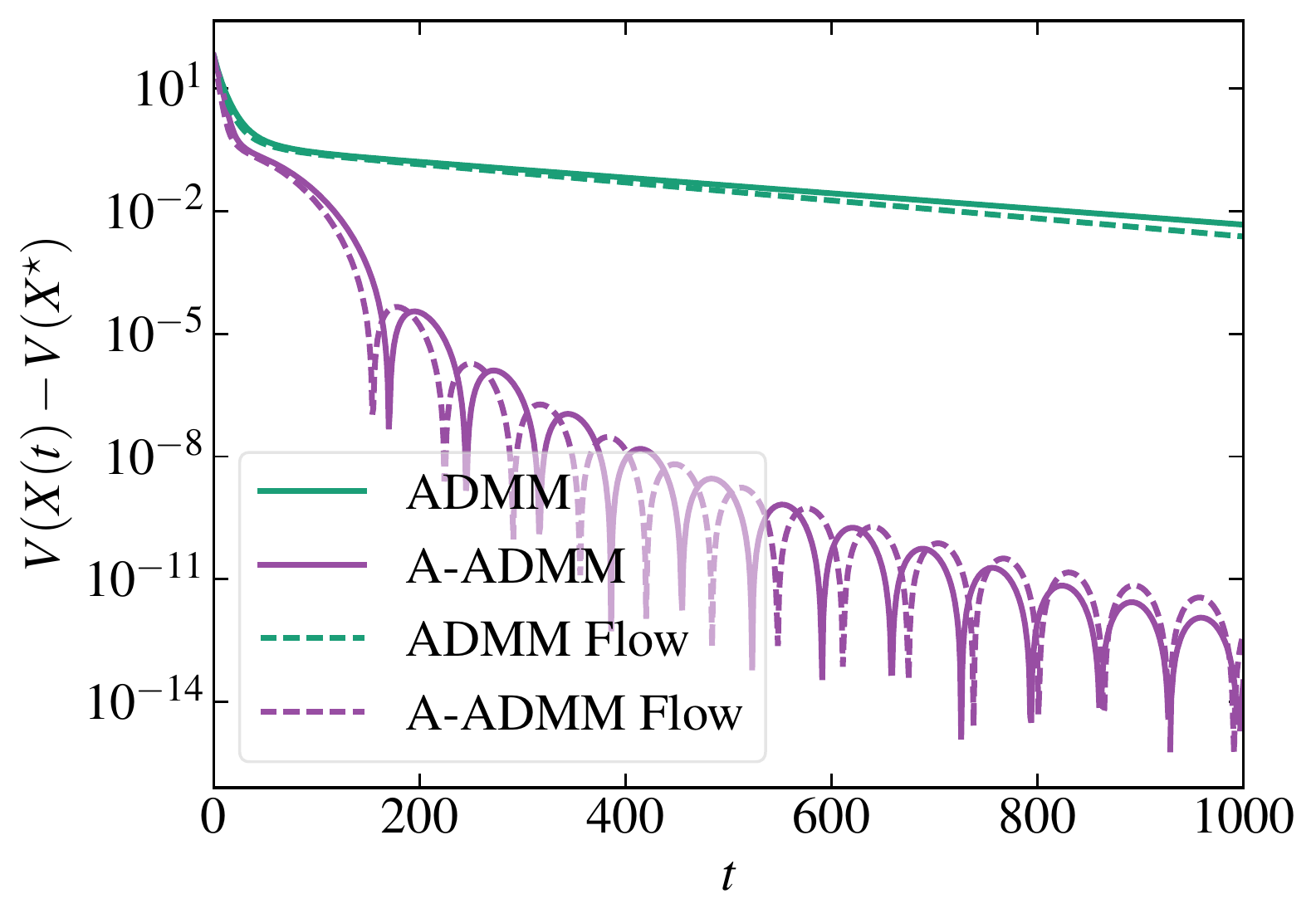}
\caption{
\label{fig:numerical}
$\min_x V(x)$ such that $z=A x$ with 
$V(x) = \tfrac{1}{2}\langle x, M x\rangle$,
where $M \in \mathbb{R}^{60\times 60}$ is a random matrix with
$40$ zero eigenvalues and the others are uniformly distributed
on $[0,10]$, and $A$ is a full column rank random matrix with condition
number $100$.
We solve this using ADMM versus solutions to \eqref{eq:admm_ode} through
4th order Runge-Kutta, and A-ADMM
versus solutions to \eqref{eq:fast_admm_ode} through the symplectic Euler
method \eqref{eq:symp_euler}. We choose $r = 10$ and $\rho = 50$. 
The initial conditions are $X(0) = x_0 = 5 (1,1,\dotsc,1)^T$ 
and $\dot{X}(0)=0$.
The curves
are close and the rates \eqref{eq:rate_admm} and
\eqref{eq:rate_damped_gradient} hold.
}
\end{figure}

\section{Conclusions}
\label{sec:conclusions}

Previous work considered dynamical systems for continuous limits of gradient-based methods
for unconstrained optimization~\cite{Candes:2016,Wibisono:2016,Krichene:2015}. Our paper builds upon these
results by showing
that the continuous limits of ADMM and A-ADMM correspond to 
first- and second-order 
dynamical systems, respectively; 
see Theorems~\ref{th:admm_ode}~and~\ref{th:fast_admm_ode}.  
Next, 
using a Lyapunov stability analysis, 
we presented conditions %on the objective function 
that ensure stability and asymptotic
stability of the dynamical systems;
see Theorems~\ref{thm:as.admm}, \ref{thm:stabile.aadmm} 
and \ref{th:asymptotic_fast_admm}.
Furthermore, 
in Theorem~\ref{rate.admm} 
we obtained a convergence rate of $O(1/t)$ for the
dynamical system related to ADMM,
which is consistent with the known
$O(1/k)$ convergence rate of the discrete-time ADMM, %and 
whereas in 
Theorem~\ref{th:rate_damped_gradient} we obtained
a convergence rate of $O(1/t^2)$ for the dynamical system related to %for 
A-ADMM, which is a new result since this rate
is unknown for discrete-time A-ADMM.
We also showed that the dynamical system associated to \mbox{A-ADMM}
is a Hamiltonian system, 
%equations \eqref{eq:ham} and \eqref{eq:ham_eq},
and by employing a simple symplectic integrator verified numerically
the agreement between discrete- and continuous-time dynamics.

The results presented in this paper may be useful for understanding the behavior
of ADMM and A-ADMM for non-convex problems as well. For instance,
following ideas from \citet{Jin:2017} and \citet{Lee:2017} 
an analysis of the center manifold of the dynamical systems \eqref{eq:admm_ode}
and \eqref{eq:fast_admm_ode} can provide valuable insights on the stability
of saddle points, which is considered a major issue 
in non-convex optimization. Also,
ADMM is well-suited to large-scale problems in statistics and machine 
learning, being equivalent to Douglas-Rachford splitting and
closely related to other algorithms 
such as augmented Lagrangian methods, 
dual decomposition, 
and Dykstra's alternating projections.
Therefore, our results may give new
insights into these methods as well.

\begin{acknowledgments}
This work was supported by grants
ARO MURI W911NF-17-1-0304 and
NSF 1447822.  
\end{acknowledgments}

\bibliographystyle{utphys}
\bibliography{biblio.bib}

\end{document}